\def\leq{\leqslant}
\def\geq{\geqslant}
\def\leq{\leqslant}
\def\geq{\geqslant}
\theoremstyle{definition}
\def\0{\rm \bf 0}
\def\geq{\geqslant}
\numberwithin{equation}{section}
\newtheorem{theorem}{Theorem}[section]
\newtheorem{remark}[theorem]{Remark}
\newtheorem{example}[theorem]{Example}
\begin{document}
\baselineskip=17pt

\markboth{Hoang Nhat Quy}{The complex Monge-Amp\`ere equation in the Cegrell's classes}

\title{The complex Monge-Amp\`ere equation in the Cegrell's classes}

\author{Hoang Nhat Quy}

\address{Faculty of Mathematics\\ The University of Danang - University of Science and Education\\ Da Nang, Viet Nam \\
hoangnhatquy@gmail.com}

\maketitle

\begin{abstract}
In this paper, we give some precise characterizations of existence of solution to the complex Monge - Amp\`ere equation in the classes $\mathcal E_\chi(\Omega)$ and $\mathcal E_{\chi,loc}(\Omega)$.
\end{abstract}

\keywords{Monge - Amp\`ere equation, plurisubharmonic functions, pluricomplex energy, $\mathcal{E}_{\chi}(\Omega)$, $\mathcal{E}_{\chi, loc}(\Omega)$.}

{Mathematics Subject Classification 2000: 32U15, 32W20, 32U05}

\section{Introduction}\label{sec1}
In this paper we examines the complex Monge - Amp\`ere equation $(dd^cu)^n=\mu$, where $\mu$ is a given non-negative Radon measure and $(dd^c.)^n$ denotes the complex Monger - Amp\`ere operator. If $\mu$ puts mass on a pluripolar set, the solution to $(dd^cu)^n=\mu$ generally cannot be uniquely determined (\cite{Ze97}). Therefore, the existence of solutions to this equation is the main interest of this article. 

Now, we will review some results related to this equation. The first is in \cite{BT1} where the authors found plurisubharmonic solutions of the Dirichlet problem for the complex Monge - Amp\`ere equation with continuous boundary values in a strictly pseudoconvex domain. In \cite{Lem81} and \cite{Lem83}, Lempert showed  the existence of solutions when the support of the given measure is a single point. He also delved into solutions with real-analytic boundary data and logarithmic singularity near the support of the measure. The underlying domain was assumed to be a strictly convex domain in $\mathbb C^n$, as in \cite{CePo97}, the authors studied the Monge - Amp\`ere equation with the Dirac measure as a given measure. 

In the context that $\Omega$ is a hyperconvex domain in $\mathbb C^n$ (see Section 2 for the definition of hyperconvex domain), Demailly proved (Theorem 4.3 in \cite{De87}) that $(dd^cg_{A_1})^n=(2\pi)^n\delta_z$ on a hyperconvex domain $\Omega$, where $\delta_z$ is the Dirac measure at $z$, and $g_{A_1}$ is the pluricomplex Green function with a pole set containing a single point $A_1=\{z\}$. Next, in \cite{Le89}, Lelong introduced the pluricomplex Green function with a finite pole set, $A_k=\{z_1, ..., z_k\}$, and with positive weights $v_1,..., v_k\; (v_j>0, j=1,...,k)$, and proved that $(dd^cg_{A_k})^n=(2\pi)^n\sum_{j=1}^kv_j^n\delta_{z_j}$. Therefore the pluricomplex Green function is not a solution to the complex Monge - Amp\`ere equation if we want the solution to have other boundary values than those which are identically zero. Given a discrete measure with compact support in a hyperconvex domain $\Omega$, Zeriahi proved in \cite{Ze97} that the complex Monge - Amp\`ere equation is solvable for certain continuous boundary values. In \cite{Xi99}, Xing generalized Zeriahi's result in the case where the given boundary values are identically zero. He considered measures that were majorized by the sum of a linear combination of countable numbers of Dirac measures with compact support and a certain regular Monge - Amp\`ere measure. Recently, in \cite{ACCH09} and \cite{Be15}, the author study the complex Monge - Amp\`ere equation on Cegrell's classes. In \cite{ACCH09}, the authors proved that if a non-negative Borel measure is dominated by a complex Monge - Amp\`ere measure, it is a complex Monge - Amp\`ere measure. Also, in \cite{Be15}, Benelkouchi solved the complex Monge - Amp\`ere equation on the weighted pluricomplex energy classes with the maximal plurisubharmonic boundary value given.

In this article, we will discuss some precise characterizations of existence of solution to the complex Monge - Amp\`ere equation in the classes $\mathcal E_\chi(\Omega)$ and $\mathcal E_{\chi, loc}(\Omega)$. We intend to prove that if the non-negative Radon measure is the Monge - Amp\`ere measure localy in $\Omega$, it is the Monge - Amp\`ere measure on $\Omega$.

\section{Preliminaries}\label{sec2}
Throughout this paper it is always assumed that $\Omega$ is a hyperconvex domain, that is, a connected, bounded open subset of $\mathbb C^n$, such that there exists a negative plurisubharmonic function $\rho$ with $\{z\in\Omega: \rho(z)<c\}\Subset\Omega$, for all $c<0$. Such a function $\rho$ is called an exhaustion function. We let PSH$(\Omega)$ denote the cone of plurisubharmonic functions on $\Omega$ and let PSH$^-(\Omega)$ denote the subclass of negative functions. Now, we will recall some important pluricomplex energy classes and measure classes which are used in the next section.

Firstly, we recall some pluricomplex energy classes that was introduced in \cite{Ce98} and \cite{Ce04}:
$$
\begin{aligned}
	&\mathcal E_0(\Omega)=\{\varphi\in\text{L}^\infty(\Omega): \lim_{z\to\xi}\varphi(z)=0,\;\forall\xi\in\partial\Omega \text{ and } \int_\Omega(dd^c\varphi)^n<+\infty\},\\
	&\mathcal F(\Omega)=\left\{\varphi\in\text{PSH}^-(\Omega): \exists\mathcal E_0(\Omega)\ni\varphi_j\searrow\varphi, \sup_{j\ge 1}\int_\Omega(dd^c\varphi_j)^n<+\infty\right\},\\
	&\begin{aligned}
		\mathcal E(\Omega)=\left\{\varphi\in\text{PSH}^-(\Omega):\right.& \forall  z_0\in\Omega,\exists \text{ a neighbourhood } \omega\ni z_0,\\
		& \left. \mathcal E_0(\Omega)\ni\varphi_j\searrow\varphi\text{ on } \omega, \sup_{j\ge 1}\int_\Omega (dd^c\varphi_j)^n<+\infty\right\}.
	\end{aligned}
\end{aligned}
$$

The following weighted pluricomplex energy classes are introduced in \cite{BGZ09} and in \cite{HHQ13}:
$$
\begin{aligned}
	&\mathcal E_\chi(\Omega)=\left\{\varphi\in PSH^-(\Omega): \exists \mathcal E_0(\Omega)\ni\varphi_j\downarrow\varphi, \sup_{j\geq 1}\int_\Omega\chi(\varphi_j)(dd^c\varphi_j)^n<+\infty\right\},\\
	&\mathcal{E}_{\chi, loc} (\Omega)=\{\varphi\in \text{PSH}^-(\Omega):\exists\ \psi_D \in \mathcal{E}_\chi (\Omega) \text{ such that } \varphi = \psi_D\text{ on } D,\ \forall\ D\Subset\Omega\}.
\end{aligned}
$$

\begin{remark}\label{rem24}
	i) By the Remark following Definition 4.6 in \cite{Ce04} we have: for every $u\in\mathcal E(\Omega)$ and every $\omega\Subset\Omega$, there is a $u_\omega\in\mathcal F(\Omega)$ sucth that $u=u_\omega$ on $\omega$.\\
	ii) It is known from \cite{Be09} and \cite{BGZ09} that if $\chi$ is bounded and $\chi(0)>0$ then $\mathcal E_\chi(\Omega)=\mathcal F(\Omega)$.\\
	iii) By Theorem 3.1 in \cite{HH11}, if $\chi\not\equiv 0$ then $\mathcal E_\chi(\Omega)\subset\mathcal E(\Omega)$.\\
	iv) By Remark 3.4 in \cite{HH11}, if $\chi(-\infty)=+\infty$ then $\mathcal E_\chi(\Omega)\subset\mathcal E^a(\Omega)$, where
	$$\mathcal E^a(\Omega):=\{\varphi\in\mathcal E(\Omega): (dd^c\varphi)^n(E)=0,\;\text{$\forall E\subset\Omega$ is the pluripolar set}\}.$$
\end{remark}
We set
$$\mathcal A_M=\{\chi: \mathbb R^-\to\mathbb R^+: \chi \text{ is decreasing and satisfies } \chi(2t)\leq M\chi(t)\}.$$

Throughout this article, we assume that $\chi\in\mathcal A_M$ for some $M>0$.

Then, let $\mathcal M(\Omega)$ be  the set of all non-negative Radon measures on $\Omega$. We set
$$
\begin{aligned}
	&\quad\quad\mathcal M^a(\Omega)=\{\mu\in\mathcal M(\Omega): \mu(E)=0 \text{ for all the Borel pluripolar sets in } \Omega\},\\
	&\quad\quad\mathcal M_\chi(\Omega)=\{\mu\in\mathcal M(\Omega): \exists \varphi\in\mathcal E_\chi(\Omega) \text{ such that } \mu=(dd^c\varphi)^n\},\\
	&\quad\quad\mathcal M_{\chi, loc}(\Omega)=\{\mu\in\mathcal M(\Omega): \exists\varphi\in\mathcal E_{\chi,loc}(\Omega) \text{ such that } \mu=(dd^c\varphi)^n\},\\
	&\quad\quad\mathcal M^\omega_{\chi,loc}(\Omega)=\{\mu\in\mathcal M(\Omega): \mu|_K\in\mathcal M_{\chi,loc}(\Omega) \text{ for all } K\Subset\Omega\}.
\end{aligned}
$$

\begin{remark}\label{rem41}
	i) We can check that $\mathcal M_\chi(\Omega)\subset \mathcal M_{\chi,loc}(\Omega)\subset \mathcal M^\omega_{\chi,loc}(\Omega)$. From Theorem 3.8 in \cite{Qu23} and the main result in \cite{HHQ13}, $\mathcal E_\chi(\Omega)\varsubsetneq\mathcal E_{\chi,loc}(\Omega)$. This combined with the comparison principle, it is inferred that $\mathcal M_\chi(\Omega)\varsubsetneq\mathcal M_{\chi,loc}(\Omega)$. And in the Example \ref{ex25}, we assert that $\mathcal M_{\chi,loc}(\Omega)\varsubsetneq \mathcal M^\omega_{\chi,loc}(\Omega)$.
	
	ii) We have another characterization for the class $\mathcal M^\omega_{\chi,loc}(\Omega)$ as follows:
	$$\mathcal M^\omega_{\chi,loc}(\Omega)=\{\mu\in\mathcal M(\Omega): \mu|_K\in\mathcal M_\chi(\Omega) \text{ for all } K\Subset\Omega\}.$$
	\begin{proof}
		From Theorem 3.8 in \cite{Qu23}, we have
		$$\mathcal M^\omega_{\chi,loc}(\Omega)\supset\{\mu\in\mathcal M(\Omega): \mu|_K\in\mathcal M_\chi(\Omega) \text{ for all } K\Subset\Omega\}.$$
		To prove the converse, we take $\mu\in\mathcal M^\omega_{\chi,loc}(\Omega)$ and $K\Subset\Omega$. Then there exists $\varphi\in\mathcal E_{\chi,loc}(\Omega)$ such that $\mu|_K=(dd^c\varphi)^n$. Let the subset $D$ such that $K\Subset D\Subset \Omega$. Set
		$$h^{\varphi}_{D,\Omega}=\sup\{\psi\in\text{PSH}^-(\Omega): \psi\leq\varphi \text{ on } D\}.$$
		Then $h^{\varphi}_{D,\Omega}\in\mathcal E_\chi(\Omega)$ and 
		$$(dd^ch^{\varphi}_{D,\Omega})^n\geq 1_D(dd^c\varphi)^n\geq \mu|_K.$$
		From Theorem 4.14 in \cite{ACCH09}, we can find $u\in\mathcal E(\Omega)$ such that $u\geq h^{\varphi}_{D,\Omega}$ and $\mu|_K=(dd^cu)^n$. Since $u\in\mathcal E_\chi(\Omega)$ we deduce $\mu|_K\in\mathcal M_\chi(\Omega)$.
	\end{proof}
	
	iii) From Theorem 4.14 in \cite{ACCH09}, we have some more characterizations for the classes $M_\chi(\Omega), M_{\chi, loc}(\Omega)$ and $M^\omega_{\chi,loc}(\Omega)$ as follows:
	$$
	\begin{aligned}
		&\quad\quad\mathcal M_\chi(\Omega)=\{\mu\in\mathcal M(\Omega): \exists\varphi\in\mathcal E_\chi(\Omega) \text{ such that } \mu\leq (dd^c\varphi)^n\},\\
		&\quad\quad\mathcal M_{\chi, loc}(\Omega)=\{\mu\in\mathcal M(\Omega): \exists\varphi\in\mathcal E_{\chi,loc}(\Omega) \text{ such that } \mu\leq  (dd^c\varphi)^n\},\\
		&\quad\quad\mathcal M^\omega_{\chi,loc}(\Omega)=\{\mu\in\mathcal M(\Omega): \forall K\Subset\Omega, \exists\varphi_K\in\mathcal E_{\chi,loc}(\Omega)\\
		&\quad\quad\quad\quad\quad\quad\quad\quad\quad\quad\quad\quad\quad\quad\quad \text{ such that } \mu|_K\leq (dd^c\varphi_K)^n\}.
	\end{aligned}
	$$
\end{remark}

\begin{example}\label{ex25}
	We can find a measure $\mu\in\mathcal M^\omega_{\chi,loc}(\Omega)\setminus\mathcal M_{\chi,loc}(\Omega)$. 
	\begin{proof}
		Let $\Omega_1\Subset\Omega_2\Subset...\Subset\Omega$ and $\Omega=\cup_{j=1}^\infty\Omega_j$. Set
		$$h_{\Omega_j,\Omega}=\sup\{\psi\in\text{PSH}^-(\Omega): \psi\leq -1\;\text{ on }\; \Omega_j\}.$$
		Then $\text{supp}(dd^ch_{\Omega_j,\Omega})\subset \partial\Omega_j$. Set
		$$\mu=\sum_{j=1}^\infty(dd^ch_{\Omega_j,\Omega})^n.$$
		Then we have
		$$
		\begin{aligned}
			\mu|_{\Omega_k}=\sum_{j=1}^k(dd^ch_{\Omega_j,\Omega})^n\leq\left(dd^c\Big(\sum_{j=1}^kh_{\Omega_j,\Omega}\Big)\right)^n.
		\end{aligned}
		$$
		This implies that $\mu|_{\Omega_k}\in\mathcal M_\chi(\Omega),\; \forall k\geq 1$. So $\mu\in\mathcal M^\omega_{\chi,loc}(\Omega)$. \\
		We are going to prove that there does not exist $\varphi\in\mathcal E(\Omega)$ such that $(dd^c\varphi)^n=\mu$. Indeed, we assume that there exists $\varphi\in\mathcal E(\Omega)$ such that $\mu=(dd^c\varphi)^n$. Since
		$$(dd^c\varphi)^n\geq (dd^ch_{\Omega_j,\Omega})^n,\quad\forall j\geq 1$$
		we have $\varphi\leq h_{\Omega_j,\Omega}$ on $\Omega$ for all $j\geq 1$. This infer that $\varphi\leq -1$ on $\Omega$. So we have $u+1\in\text{PSH}^-(\Omega)$ and
		$$\mu=\left(dd^c(\varphi+1)\right)^n.$$
		Repeating the above arguments we get $\varphi\leq -k$ for all $k\geq 1$. This derives $\varphi\equiv-\infty$ on $\Omega$. 
	\end{proof}  	
\end{example}

\section{The complex Monge - Amp\`ere equation in the class $\mathcal E_\chi(\Omega)$}\label{sec4}

In this section, we are going to state some main results on characterizations of existence of solution to the Monge - Amp\`ere equation in the class $\mathcal E_\chi(\Omega)$. 

\begin{theorem}\label{the42}
	Let $\chi\in\mathcal A_M$ and $\mu\in\mathcal M^a(\Omega)$. Then the following two statements are equivalent:\\
	i) There exists $\varphi\in\mathcal E_\chi(\Omega)$ such that $\mu = (dd^c\varphi)^n$,\\
	ii) There exists $A_\mu\in [0,1)$ and $C_\mu>0$ such that 
	$$\int_\Omega\chi(u)d\mu\leq A_\mu e_\chi(u)+C_\mu,\quad \forall u\in\mathcal E_0(\Omega),$$
	where $e_\chi(u)=\int_\Omega\chi(u)(dd^cu)^n$. 
\end{theorem}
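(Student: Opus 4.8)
The plan is to prove the two implications separately; the reverse one is the substantive direction, and the point of the condition $A_\mu<1$ is precisely to turn the a priori estimate into a uniform energy bound along an approximating sequence. For (i)$\Rightarrow$(ii), suppose $\mu=(dd^c\varphi)^n$ with $\varphi\in\mathcal E_\chi(\Omega)$, so $e_\chi(\varphi)<+\infty$. I would invoke the weighted analogue of Cegrell's fundamental inequality (cf. \cite{BGZ09}, \cite{HH11}): there is a constant $D=D(n,M)$ so that for every $u\in\mathcal E_0(\Omega)$,
$$\int_\Omega\chi(u)(dd^c\varphi)^n\leq D\,\big(e_\chi(u)\big)^{\frac{1}{n+1}}\big(e_\chi(\varphi)\big)^{\frac{n}{n+1}},$$
the doubling condition $\chi\in\mathcal A_M$ being exactly what makes such an estimate available. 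As the exponent $\tfrac{1}{n+1}$ on $e_\chi(u)$ is strictly less than $1$, Young's inequality produces, for any prescribed $A\in(0,1)$, a constant $C=C(A,n,M,e_\chi(\varphi))$ with $D\,t^{1/(n+1)}e_\chi(\varphi)^{n/(n+1)}\leq A\,t+C$ for all $t\geq0$; taking $t=e_\chi(u)$, $A_\mu=A$ and $C_\mu=C$ yields (ii). The only genuine ingredient here is the mixed-energy inequality.

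For (ii)$\Rightarrow$(i) I would argue by approximation. Since $\mu\in\mathcal M^a(\Omega)$, Cegrell's decomposition theorem (\cite{Ce04}) lets me write $\mu=f\,(dd^c\psi)^n$ with $\psi\in\mathcal E_0(\Omega)$ and $0\le f\in L^1_{loc}((dd^c\psi)^n)$. Fixing an exhaustion $K_j\Subset\Omega$ with $K_j\nearrow\Omega$ and setting $\mu_j=\mathbf 1_{K_j}\min(f,j)\,(dd^c\psi)^n$, one has $\mu_j\nearrow\mu$ and $\mu_j\le j\,(dd^c\psi)^n=(dd^c(j^{1/n}\psi))^n$ with $j^{1/n}\psi\in\mathcal E_0(\Omega)$. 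By the subsolution theorem (Theorem 4.14 in \cite{ACCH09}) there is a bounded $\varphi_j\in\mathcal E_0(\Omega)$ solving $(dd^c\varphi_j)^n=\mu_j$ with $\varphi_j\ge j^{1/n}\psi$; since $\mu_j$ increases, the comparison principle forces $\varphi_j\searrow\varphi$ for some $\varphi$.

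The decisive step, and the only place where $A_\mu<1$ is used, is the uniform energy estimate. Each $\varphi_j\in\mathcal E_0(\Omega)$ is an admissible test function in (ii), so using $\mu\ge\mu_j$ and $\chi\ge0$,
$$e_\chi(\varphi_j)=\int_\Omega\chi(\varphi_j)\,d\mu_j\leq\int_\Omega\chi(\varphi_j)\,d\mu\leq A_\mu\,e_\chi(\varphi_j)+C_\mu,$$
whence $e_\chi(\varphi_j)\leq C_\mu/(1-A_\mu)$ for every $j$. This bound forces $\varphi\not\equiv-\infty$, and then $\varphi\in\mathcal E_\chi(\Omega)$ holds by the very definition of the class, since $\mathcal E_0(\Omega)\ni\varphi_j\searrow\varphi$ with $\sup_j e_\chi(\varphi_j)<+\infty$.

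I expect the passage to the limit to be the main obstacle. From $\mu_j\nearrow\mu$ one gets $(dd^c\varphi_j)^n\to\mu$ weakly, so it remains to prove $(dd^c\varphi_j)^n\to(dd^c\varphi)^n$ weakly, which then gives $(dd^c\varphi)^n=\mu$ and finishes the proof. This convergence is the technical heart: it is not automatic for a merely decreasing sequence and must be drawn from the convergence theorem for the complex Monge-Amp\`ere operator along decreasing sequences of uniformly bounded weighted energy (quasicontinuity together with the bound $\sup_j e_\chi(\varphi_j)<+\infty$ just obtained). Throughout, the hypothesis $\mu\in\mathcal M^a(\Omega)$ is what secures the decomposition and ensures the limit reproduces $\mu$ without spurious pluripolar mass.
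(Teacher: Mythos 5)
Your direction (ii)$\Rightarrow$(i) is essentially the paper's argument: Cegrell's decomposition $\mu=f(dd^c\phi)^n$ (the singular part vanishes because $\mu\in\mathcal M^a(\Omega)$), solution of the truncated problems $(dd^c\varphi_j)^n=\min(f,j)(dd^c\phi)^n$ with $\varphi_j\in\mathcal E_0(\Omega)$ decreasing, the self-improving estimate
$$e_\chi(\varphi_j)\leq\int_\Omega\chi(\varphi_j)\,d\mu\leq A_\mu e_\chi(\varphi_j)+C_\mu,$$
hence $\sup_j e_\chi(\varphi_j)\leq C_\mu/(1-A_\mu)$, and the convergence theorem for decreasing sequences. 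That part is sound; the limit passage you flag is the standard convergence theorem in $\mathcal E(\Omega)$ for decreasing sequences with uniformly bounded energy, which is exactly what the paper invokes as well.

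The gap is in (i)$\Rightarrow$(ii). The inequality you rely on,
$$\int_\Omega\chi(u)(dd^c\varphi)^n\leq D\,e_\chi(u)^{1/(n+1)}e_\chi(\varphi)^{n/(n+1)},$$
is false for general $\chi\in\mathcal A_M$: take $\chi\equiv1$ (allowed, with $M=1$, and then $e_\chi(u)=\int_\Omega(dd^cu)^n$); the left-hand side equals $\mu(\Omega)$, independent of $u$, while the right-hand side tends to $0$ along $u=\delta u_0$ with $\delta\downarrow0$. The correct exponent in such mixed-energy inequalities depends on the growth of $\chi$ (for $\chi(t)=|t|^p$ it is $p/(n+p)$, which degenerates as $p\to0$), and the versions valid uniformly over $\mathcal A_M$ only give $\int\chi(u)(dd^c\varphi)^n\leq D\max(e_\chi(u),e_\chi(\varphi))$ with a constant $D$ possibly exceeding $1$ --- not enough to manufacture $A_\mu<1$. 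Producing $A_\mu<1$ is precisely the content of this direction, and the paper works for it: it proves that for \emph{every} $A\in(0,1)$ one has $\int_\Omega\chi(u)(dd^c\varphi)^n\leq A\,e_\chi(u)+B\,e_\chi(\varphi)$ with $B=M^2(M/A)^{(\log_2M)/n}$, by writing the left-hand side as an integral over the sublevel sets $\{u<-t\}$, splitting $\{u<-t\}\subset\{u<\varphi/\epsilon-t/2\}\cup\{\varphi/\epsilon<-t/2\}$ with $\epsilon=(A/M)^{1/n}$, and estimating the two pieces via the comparison principle and the doubling property $\chi(2t)\leq M\chi(t)$ (first for smooth $\chi$, then by regularization). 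You would need to supply this, or an equivalent, estimate; Young's inequality applied to a product bound that does not hold cannot close this direction.
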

\begin{proof}
	$\bullet$ $i)\Rightarrow ii)$: We only need to prove that
	$$\int\limits_{ \Omega } \chi ( u ) ( dd^c  \varphi )^n \leq  A e_\chi ( u ) + B e_\chi ( \varphi ),$$
	where $B = M^2 [ \frac M A ]^{ \frac { \log_2 M } { n } }$. First, we consider the case of $\chi\in C^\infty(\mathbb R)$. We have
	$$\aligned\int\limits_{ \Omega } \chi ( u ) ( dd^c  \varphi )^n& = - \int\limits_{ 0 }^{ + \infty } \chi ' ( - t )  \int\limits_{ \{ u < - t \} } ( dd^c  \varphi )^n dt + \chi ( 0 ) \int\limits_{ \Omega } ( dd^c  \varphi )^n\\
	&\leq I + J + \chi ( 0 ) \int\limits_{ \Omega } ( dd^c  \varphi )^n,
	\endaligned$$
	where
	$$I = - \int\limits_{ 0 }^{ + \infty } \chi ' ( - t )  \int\limits_{ \{ u < \frac { \varphi } { \epsilon } - \frac { t } {2} \} } ( dd^c  \varphi )^n dt,$$
	$$J = - \int\limits_{ 0 }^{ + \infty } \chi ' ( - t )  \int\limits_{ \{ \frac { \varphi } { \epsilon } < - \frac { t } {2} \} } ( dd^c  \varphi )^n dt,$$
	and 
	$$\epsilon = [ \frac { A } { M } ]^{ \frac 1 n }.$$
	From the comparison principle, we have 
	$$\begin{aligned}
		I&=\epsilon^n\int\limits_0^{+\infty}-\chi'(-t)dt\int_{\{u<\frac{\varphi}{\epsilon}-\frac{t}{2}\}}\left(dd^c\left(\frac{\varphi}{\epsilon}-\frac{t}{2}\right)\right)^n
	\end{aligned}$$
	$$\begin{aligned}
		&\leq\epsilon^n\int\limits_0^{+\infty}-\chi'(-t)dt\int_{\{u<\frac{\varphi}{\epsilon}-\frac{t}{2}\}}(dd^cu)^n\\
		&\leq\epsilon^n\int\limits_0^{+\infty}-\chi'(-t)dt\int_{\{u<\frac{-t}{2}\}}(dd^cu)^n\\
		&=\frac{\epsilon^n}{2^n}\int\limits_0^{+\infty}-\chi'(-t)dt\int_{\{2u<-t\}}(dd^c2u)^n\\
		&\leq\frac{\epsilon^n}{2^n}\int\limits_\Omega\chi(2u)(dd^c2u)^n\\
		&=\epsilon^n\int\limits_\Omega\chi(2u)(dd^cu)^n\\
		&\leq\epsilon^nM\int\limits_\Omega\chi(u)(dd^cu)^n=A e_\chi(u),
	\end{aligned}$$
	We have
	$$\begin{aligned}
		J&=\int\limits_0^{+\infty} - \chi'(-t) dt\int\limits_{\{\frac{\varphi}{\epsilon}<\frac{-t}{2}\}}(dd^c\varphi)^n\\
		&=\left(\frac{\epsilon}{2}\right)^n\int\limits_0^{+\infty}-\chi'(-t)dt\int\limits_{ \{ \frac{2\varphi}{\epsilon}<-t \} } \left(dd^c\left(\frac{2\varphi}{\epsilon}\right)\right)^n\\
		&=\left(\frac{\epsilon}{2}\right)^n \left[ \int\limits_\Omega \chi\left(\frac{2\varphi}{\epsilon}\right)\left(dd^c\left(\frac{2\varphi}{\epsilon}\right)\right)^n - \chi(0)\int\limits_\Omega\left(dd^c\frac{2\varphi}{\epsilon}\right)^n\right]\\
		&=\int\limits_\Omega \chi\left(\frac{2\varphi}{\epsilon}\right)(dd^c\varphi)^n-\chi(0)\int\limits_\Omega(dd^c\varphi)^n\\
		&\leq M \left(\frac{2}{\epsilon}\right)^{\log_2M}\int\limits_\Omega \chi(\varphi)(dd^c\varphi)^n-\chi(0)\int\limits_\Omega (dd^c\varphi)^n\\
		&=M^2\left(\frac{1}{\epsilon}\right)^{\log_2M} \int\limits_\Omega \chi(\varphi)(dd^c\varphi)^n - \chi(0)\int\limits_\Omega (dd^c\varphi)^n\\
		&=M^2\left(\frac{M}{A}\right)^{\frac{\log_2M}{n}}\int\limits_\Omega\chi(\varphi)(dd^c\varphi)^n - \chi(0)\int\limits_\Omega (dd^c\varphi)^n
	\end{aligned}$$
	$$\begin{aligned}	
		&=\frac { M^{ 2+\frac {\log_2M} {n} } } { A^{\frac{\log_2M}{n}}}e_\chi(\varphi) - \chi(0)\int\limits_\Omega (dd^c\varphi)^n.
	\end{aligned}$$
	
	Combining these inequalities, we get
	$$\int\limits_{ \Omega } \chi ( u ) ( dd^c  \varphi )^n \leq  A e_\chi ( u ) + B e_\chi ( \varphi ),$$
	In the general case, we set
	$$\chi_{ j } (t) = j \int_0^{\frac 1 j }\chi(t - s)\rho( j s )ds,$$
	where the function $\rho: \mathbb R\to\mathbb R$ is in $C^\infty(\mathbb R)$ such that $\rho(t)\geq 0\; \forall t\in\mathbb R$, $\text{supp}\rho\Subset ( 0, 1)$ and $\int_\mathbb R\rho(t)dt=1$. Then we have $\chi_{j}\in C^\infty(\mathbb R)\cap\mathcal A_M$, $\chi_j\geq\chi$ and $\chi_j\to\chi$ on $\mathbb R^-\backslash \{ t_1, t_2, .... \} $, where $\chi$ is discontinuous on $\{ t_1, t_2, .... \}$. We modify $\chi_j$ at $\{ t_1, t_2, ..., t_j \}$ to construct $\tilde{\chi_j}\in C^\infty(\mathbb R)\cap\mathcal A_M$ such that $\tilde{\chi_j}\geq\chi$ and $\tilde{\chi_j}\to\chi$ on $\mathbb R^-$. From the proof in the first case, we get
	$$\aligned\int\limits_{ \Omega } \chi ( u ) ( dd^c  \varphi )^n &\leq \varliminf\limits_{ j\to +\infty } \int\limits_{ \Omega } \tilde{\chi_j} ( u ) ( dd^c  \varphi )^n\\
	&\leq\varliminf\limits_{ j\to +\infty }A e_{ \tilde{\chi_j} } ( u ) + B e_{ \tilde{\chi_j} } ( \varphi )\\
	&\leq  A e_\chi ( u ) + B e_\chi ( \varphi ),
	\endaligned$$
	
	$\bullet$ $ii)\Rightarrow i)$: From Theorem 6.3 in \cite{Ce98}, we can find a function $\phi\in\mathcal E_0(\Omega)$ and $f\in L^1_{loc}((dd^c\phi)^n)$ such that $\mu=f(dd^c\phi)^n$. \\
	From Theorem A in \cite{Ko95}, there exist $\varphi_j\in\mathcal E_0(\Omega)$ such that 
	$$(dd^c\varphi_j)^n=\min(f,j)(dd^c\phi)^n.$$
	Since $(dd^c\varphi_j)^n\;\nearrow f(dd^c\phi)^n=\mu$ we have $\varphi_j\searrow \varphi\in\text{PSH}^-(\Omega)$. \\
	We have
	$$\int_\Omega\chi(\varphi_j)d\mu\leq A_\mu e_\chi(\varphi_j)+C_\mu.$$
	This implies that
	$$
	\begin{aligned}
		\int_\Omega\chi(\varphi_j)(dd^c\varphi_j)^n\leq \int_\Omega\chi(\varphi_j)d\mu \leq A_\mu e_\chi(\varphi_j)+C_\mu. 
	\end{aligned}
	$$	
	So we have
	$$\int_\Omega\chi(\varphi_j)(dd^c\varphi_j)^n\leq \frac{C_\mu}{1-A_\mu},\quad\forall j\geq 1.$$
	This implies that
	$$\sup_{j\geq 1}\int_\Omega\chi(\varphi_j)(dd^c\varphi_j)^n\leq \frac{C_\mu}{1-A_\mu}<+\infty.$$
	So $\varphi\in\mathcal E_\chi(\Omega)\subset\mathcal E(\Omega)$ and 
	$$(dd^c\varphi)^n=\lim_{j\to\infty}(dd^c\varphi_j)^n\nearrow \mu.$$
\end{proof}

\begin{theorem}\label{the43}
	Let $\chi\in\mathcal A_M$ and $\mu\in\mathcal M(\Omega)$. Then the following two statements are aquivalent:\\
	i) There exists $\varphi\in\mathcal E_\chi(\Omega)$ such that $\mu=(dd^c\varphi)^n$,\\
	ii) There exists $A_\mu\in [0,1), C_\mu>0$ such that
	$$\int_\Omega\chi(u)d\mu\leq A_\mu e_\chi(u)+C_\mu,\quad\forall u\in\mathcal E_0(\Omega)$$
	and for all $z\in\Omega$ there exists a neighbourhood $D$ of $z$ and a function $\varphi_z\in\mathcal E(D)$ such that $\mu\leq (dd^c\varphi_z)^n$.
\end{theorem}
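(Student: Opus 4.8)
The plan is to prove the two implications separately. The forward direction is essentially free from the energy estimate already extracted in the proof of Theorem~\ref{the42}, while the reverse direction is where the work lies: I would first show that the two hypotheses force $\mu|_K\in\mathcal M_\chi(\Omega)$ for every $K\Subset\Omega$, and then pass from these local solutions to a global one using the \emph{global} integral inequality to keep the $\chi$-energy under control. For $i)\Rightarrow ii)$, suppose $\mu=(dd^c\varphi)^n$ with $\varphi\in\mathcal E_\chi(\Omega)$. The computation in Theorem~\ref{the42} shows that for every $A\in(0,1)$ one has $\int_\Omega\chi(u)(dd^c\varphi)^n\le A\,e_\chi(u)+B\,e_\chi(\varphi)$ for all $u\in\mathcal E_0(\Omega)$, with $B=M^2[M/A]^{(\log_2M)/n}$; fixing such an $A$ and setting $A_\mu=A$, $C_\mu=B\,e_\chi(\varphi)<+\infty$ gives the integral inequality. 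For the local part, by Remark~\ref{rem24} iii) we have $\varphi\in\mathcal E(\Omega)$, so for $z\in\Omega$ I choose an open $D$ with $z\in D\Subset\Omega$ and take $\varphi_z=\varphi|_D\in\mathcal E(D)$, which satisfies $(dd^c\varphi_z)^n=\mu$ on $D$, hence $\mu\le(dd^c\varphi_z)^n$.

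For $ii)\Rightarrow i)$, I fix $K\Subset\Omega$ and first produce $v_K\in\mathcal E_\chi(\Omega)$ with $(dd^cv_K)^n=\mu|_K$. By the local hypothesis and compactness I cover $K$ by finitely many $D_{z_1},\dots,D_{z_m}$ carrying $\varphi_{z_i}\in\mathcal E(D_{z_i})$ with $\mu\le(dd^c\varphi_{z_i})^n$; using Remark~\ref{rem24} i) I replace each $\varphi_{z_i}$ by a function of $\mathcal F(\Omega)$ agreeing with it near $z_i$, and by superadditivity of the Monge--Amp\`ere operator a suitable sum $w_K\in\mathcal F(\Omega)$ satisfies $\mu|_K\le(dd^cw_K)^n$. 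Theorem 4.14 in \cite{ACCH09} then yields $v_K\in\mathcal F(\Omega)$ with $(dd^cv_K)^n=\mu|_K$. To see $v_K\in\mathcal E_\chi(\Omega)$ I test $ii)$ on the truncations $v_K^j=\max(v_K,-j)\in\mathcal E_0(\Omega)$: since $v_K\in\mathcal F(\Omega)$ the total Monge--Amp\`ere mass is preserved under truncation, which gives the identity $\int_\Omega\chi(v_K^j)(dd^cv_K^j)^n=\int_\Omega\chi(v_K^j)\,d\mu|_K$. Combining this with $\mu|_K\le\mu$ and the integral inequality yields $(1-A_\mu)\int_\Omega\chi(v_K^j)\,d\mu|_K\le C_\mu$, so $\sup_j e_\chi(v_K^j)\le C_\mu/(1-A_\mu)$ and hence $v_K\in\mathcal E_\chi(\Omega)$. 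Thus $\mu|_K\in\mathcal M_\chi(\Omega)$ for every $K\Subset\Omega$, i.e.\ $\mu\in\mathcal M^\omega_{\chi,loc}(\Omega)$ by Remark~\ref{rem41} ii).

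It remains to pass from $K$ to $\Omega$. I take an exhaustion $K_1\Subset K_2\Subset\cdots$ with $\bigcup_jK_j=\Omega$ and the solutions $v_{K_j}\in\mathcal E_\chi(\Omega)$ above, selected (via the comparison principle applied to the increasing measures $\mu|_{K_j}\nearrow\mu$) so that $v_{K_j}\searrow\varphi$. The bound $C_\mu/(1-A_\mu)$ from the previous step is independent of $K$, so $\sup_j e_\chi(v_{K_j})\le C_\mu/(1-A_\mu)<+\infty$; stability of $\mathcal E_\chi(\Omega)$ under decreasing limits of uniformly bounded $\chi$-energy then places $\varphi\in\mathcal E_\chi(\Omega)$, and continuity of the Monge--Amp\`ere operator along this decreasing sequence gives $(dd^c\varphi)^n=\lim_j(dd^cv_{K_j})^n=\lim_j\mu|_{K_j}=\mu$, completing $ii)\Rightarrow i)$.

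The main obstacle is precisely this last global passage. The local hypothesis alone only delivers $\mu\in\mathcal M^\omega_{\chi,loc}(\Omega)$, a class strictly larger than $\mathcal M_\chi(\Omega)$ by Example~\ref{ex25}, so the global integral inequality must do the genuine work: it is what guarantees the uniform energy bound that prevents the decreasing limit $\varphi$ from collapsing to $-\infty$ (the failure mode exhibited in Example~\ref{ex25}) and that identifies $\lim_j(dd^cv_{K_j})^n$ with $\mu$ rather than a strictly smaller measure. The delicate technical points are the monotone selection of the $v_{K_j}$ when $\mu$ carries pluripolar mass (so the local solutions are not unique) and the justification, when the total mass of $\mu$ is infinite and the solution need not lie in $\mathcal F(\Omega)$, of the truncation identity $\int\chi(v^j)(dd^cv^j)^n=\int\chi(v^j)\,d\mu$; this is exactly the place where the constraint $A_\mu<1$ is used.
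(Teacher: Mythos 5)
Your implication $i)\Rightarrow ii)$ is fine and matches the paper. For $ii)\Rightarrow i)$ you take a genuinely different route --- localize to get $\mu|_{K}\in\mathcal M_\chi(\Omega)$ with a $K$-independent energy bound, then exhaust $\Omega$ --- whereas the paper first applies Cegrell's decomposition $\mu=f(dd^c\phi)^n+\nu$ with $\nu$ carried by a pluripolar set, splits into the cases $\chi(-\infty)=+\infty$ (where the integral inequality forces $\mu\in\mathcal M^a(\Omega)$ and Theorem \ref{the42} finishes) and $\chi(-\infty)<+\infty$ (where the absolutely continuous part is solved by Theorem \ref{the42}, the singular part by the monotone construction of Theorem 4.8 in \cite{ACCH09} together with the total-mass bound coming from $\chi(-\infty)<+\infty$, and the two solutions are glued by Theorem 4.14 in \cite{ACCH09}). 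Your first two steps are essentially sound modulo details the paper itself glosses over (the subextension of $\psi_{z_i}$ from $D_{z_i}$ to $\Omega$, and the fact that $\max(v_K,-j)$ lies in $\mathcal F(\Omega)\cap L^\infty$ rather than $\mathcal E_0(\Omega)$, so the inequality in $ii)$ must first be extended beyond $\mathcal E_0(\Omega)$ --- the paper also tests the inequality on functions outside $\mathcal E_0(\Omega)$ without comment).

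The genuine gap is the monotone selection of the $v_{K_j}$. You propose to arrange $v_{K_{j+1}}\leq v_{K_j}$ ``via the comparison principle applied to the increasing measures $\mu|_{K_j}\nearrow\mu$,'' but the comparison principle in the form ``$(dd^cu)^n\geq(dd^cv)^n\Rightarrow u\leq v$'' fails exactly when the measures charge pluripolar sets: solutions are then not unique (\cite{Ze97}), and two distinct solutions of the same equation would otherwise each have to majorize the other. Since the only difference between Theorem \ref{the43} and Theorem \ref{the42} is that $\mu$ is now allowed to charge pluripolar sets, this is not a removable technicality --- it is the entire content of the theorem. Without monotonicity you have neither a limit function $\varphi$ nor the weak convergence $(dd^cv_{K_j})^n\to(dd^c\varphi)^n$, and Theorem 4.14 in \cite{ACCH09} only supplies a solution bounded \emph{below} by the dominating potential, which gives no upper bound by the previous $v_{K_j}$. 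The known way to restore monotonicity is to separate $\mu$ into its non-pluripolar part (where comparison does apply) and its pluripolar part (where the solutions of \cite{ACCH09} are built as decreasing limits of an increasing family of densities against a fixed background measure $(dd^c\widetilde\psi)^n$) --- which is precisely the decomposition the paper performs. So as written the proposal does not close; filling the hole essentially forces you back onto the paper's argument.
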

\begin{proof}
	$\bullet$ $i)\Rightarrow ii)$: Using the arguments similar to the proof of Theorem \ref{the42} and $\mathcal E_\chi(\Omega)\subset\mathcal E(\Omega)$.
	
	$\bullet$ $ii)\Rightarrow i)$: From Theorem 6.3 in \cite{Ce98}, we can find a function $\phi\in\mathcal E_0(\Omega)$, $f\in L^1_{loc}((dd^c\phi)^n)$ and $\nu\in\mathcal M(\Omega)$ with $\nu(\Omega\setminus E)=0$ for some Borel pluripolar set $E$ and 
	$$\mu=f(dd^c\phi)^n+\nu.$$
	We consider two cases as follows:\\
	First, we consider the case of $\chi(-\infty)=+\infty$. From Remark 3.4 in \cite{HH11}, we have $\mathcal E_\chi(\Omega)\subset\mathcal E^a(\Omega)$. \\
	Now, we will show that $\mu\in\mathcal M^a(\Omega)$. Indeed: take a pluripolar set $E$ in $\Omega$. From Theorem 3.5 in \cite{Qu23}, there exists a function $\varphi_0\in\mathcal E_\chi(\Omega)$ such that $E= \{\varphi_0=-\infty\}$. \\
	We have
	$$\int_\Omega\chi(\varphi_0)d\mu\leq A_\mu e_\chi(\varphi_0)+C_\mu<+\infty.$$
	This implies that
	$$
	\begin{aligned}
		\mu(E)&\leq \mu(\{\varphi_0<-j\})\leq \frac{1}{\chi(-j)}\int_{\{\varphi_0<-j\}}\chi(\varphi_0)d\mu \\
		&\leq \frac{1}{\chi(-j)}\int_\Omega\chi(\varphi_0)d\mu\leq\frac{A_\mu e_\chi(\varphi_0)+C_\mu}{\chi(-j)},\quad \forall j\geq 1.
	\end{aligned}
	$$
	Letting $j\to\infty$, we get $\mu(E)=0$. Using the Theorem \ref{the42}, we can find $\varphi\in\mathcal E_\chi(\Omega)$ such that $\mu=(dd^c\varphi)^n$.\\
	Second, we consider the case of $\chi(-\infty)<+\infty$. We will prove that there exists $g\in\mathcal E_\chi(\Omega)$ such that
	$$(dd^cg)^n=f(dd^c\phi)^n.$$
	Indeed: from $f(dd^c\phi)^n\in\mathcal M^a(\Omega)$ and 
	$$\int_\Omega\chi(u)f(dd^c\phi)^n\leq\int_\Omega\chi(u)d\mu\leq A_\mu e_\chi(u)+C_\mu,\quad\forall u\in\mathcal E_0(\Omega)$$
	and from Theorem \ref{the42} we can find $g\in\mathcal E^a_\chi(\Omega)$ such that $(dd^cg)^n=f(dd^c\phi)^n$.\\
	Now, we can prove that there exists $h\in\mathcal E_\chi(\Omega)$ such that $(dd^ch)^n=\nu$. Indeed: for all $z\in\Omega$, we can find a neighbourhood $D_z$ of $z$ and $\varphi_z\in\mathcal E(D_z)$ such that 
	$$\nu|_{D_z}\leq \mu|_{D_z}\leq (dd^c\varphi_z)^n.$$
	Take a neighbourhood $V_z$ of $z$ such that $V_z\Subset D_z$. Set
	$$\psi_z=\sup\{\psi\in\text{PSH}^-(D_z): \psi\leq \varphi_z \text{ on } V_z\}.$$
	Then $\psi_z\in\mathcal F(D_z)$ and $\nu|_{V_z}\leq (dd^c\psi_z)^n$.
	We have
	$$
	\begin{aligned}
		\left[\int_\Omega\biggl(dd^c\Big(\sum_{j=1}^k\epsilon_j\widetilde{\psi}_{z_j}\Big)\biggl)^n\right]^{1/n}&\leq\sum_{j=1}^k\left(\int_\Omega \Big(dd^c(\epsilon_j\widetilde{\psi}_{z_j})\Big)^n\right)^{1/n}\\
		&=\sum_{j=1}^k\frac{1}{2^j}\leq 1.
	\end{aligned}
	$$
	This implies that
	$$\sum_{j=1}^\infty\epsilon_j\widetilde{\psi}_{z_j}=:\widetilde\psi\in\mathcal F(\Omega).$$
	We have
	$$\nu|_{V_{z_j}}\leq\frac{1}{\epsilon^n_j}(dd^c\widetilde{\psi})^n,\quad \forall j\geq 1.$$
	From decompotion theorem of measures, we can find $\theta\in L^\infty_{loc}(\Omega)$ such that $\nu=\theta(dd^c\widetilde\psi)^n$. From Theorem 4.8 in \cite{ACCH09}, we can find $h_j\in\mathcal F(\Omega)$ such that $h_j\geq h_{j+1}$ and
	$$(dd^ch_j)^n=\min(\theta, j)(dd^c\widetilde\psi)^n\nearrow \nu.$$
	Set $h=lim_{j\to\infty}h_j\in\text{PSH}^-(\Omega)$. \\
	We have
	$$\int_\Omega\chi(h_j)d\nu\leq\int_\Omega\chi(h_j)d\mu\leq A_\mu e_\chi(h_j)+C_\mu.$$
	So
	$$
	\begin{aligned}
		e_\chi(h_j)&=\int_\Omega\chi(h_j)(dd^ch_j)^n\leq\int_\Omega\chi(h_j)\theta(dd^c\widetilde\psi)^n\\
		&\leq \int_\Omega\chi(h_j)d\nu\leq\int_\Omega\chi(h_j)d\mu \\
		&\leq A_\mu e_\chi(h_j)+C_\mu.
	\end{aligned}
	$$
	This implies that
	$$e_\chi(h_j)\leq\frac{C_\mu}{1-A_\mu},\quad \forall j\geq 1.$$
	Since $\nu(\Omega\setminus E)=0$, we have
	$$
	\begin{aligned}
		(dd^ch_j)^n(\Omega\setminus E)=0&\Rightarrow\int_{\{h_j>-\infty\}}(dd^ch_j)^n=0\\
		&\Rightarrow(dd^ch_j)^n=1_{\{h_j=-\infty\}}(dd^ch_j)^n
	\end{aligned}
	$$
	We also have
	$$
	\begin{aligned}
		e_\chi(h_j)&=\int_\Omega\chi(h_j)(dd^ch_j)^n\\
		&=\int_{\{h_j=-\infty\}}\chi(h_j)(dd^ch_j)^n\\
		&=\chi(-\infty)\int_\Omega(dd^ch_j)^n.
	\end{aligned}
	$$
	Combining these inequality, we have
	$$
	\begin{aligned}
		\sup_{j\geq 1}\int_\Omega(dd^ch_j)^n\leq\sup_{j\geq 1}\frac{e_\chi(h_j)}{\chi(-\infty)}\leq\frac{C_\mu}{(1-A_\mu)\chi(-\infty)}<+\infty.
	\end{aligned}
	$$ 
	Therefore $h_j\searrow h\in\mathcal F(\Omega)$ and 
	$$(dd^ch)^n=\lim_{j\to\infty}(dd^ch_j)^n=\nu.$$
	Finally, we have
	$$\mu=f(dd^c\phi)^n+\nu\leq (dd^cg)^n+(dd^ch)^n\leq (dd^c(g+h))^n.$$
	From Theorem 4.14 in \cite{ACCH09}, we can find a function $\varphi\in\mathcal E(\Omega)$ such that $\varphi\geq g+h$ and $\mu=(dd^c\varphi)^n$. \\
	Since $\chi(-\infty)<+\infty$, we have $h\in\mathcal F(\Omega)\subset\mathcal E_\chi(\Omega)$. This implies that $\varphi\in\mathcal E_\chi(\Omega)$. 
\end{proof}

\begin{theorem}\label{the44}
	Let $\mu$ be a non-negative Radon measure on $\Omega$. Then the following two statements are equivalent:\\
	i) There exists $\varphi\in\mathcal E_\chi(\Omega)$ such that $\mu=(dd^c\varphi)^n$,\\
	ii) For all $z\in\overline\Omega$, there exists a neighbourhood $D$ of $z$ and $\psi\in\mathcal E_\chi(\Omega\cap D)$ such that $\mu|_{\Omega\cap D}=(dd^c\psi)^n$. 
\end{theorem}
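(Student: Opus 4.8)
The plan is to treat $(i)\Rightarrow(ii)$ as the routine direction and to put all the effort into the converse, which is the ``local implies global'' statement advertised in the introduction. The organising idea for both directions is to pass through the subsolution characterisation of $\mathcal M_\chi(\Omega)$ recorded in Remark \ref{rem41} iii): to produce $\varphi\in\mathcal E_\chi(\Omega)$ with $(dd^c\varphi)^n=\mu$ it is enough to exhibit a single global subsolution, that is, a function $\Phi\in\mathcal E_\chi(\Omega)$ with $\mu\leq(dd^c\Phi)^n$; the exact solution is then delivered by Theorem 4.14 of \cite{ACCH09}, together with the fact that a function dominating an $\mathcal E_\chi$-function is again in $\mathcal E_\chi(\Omega)$.

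For $(i)\Rightarrow(ii)$ I would argue by restriction. Given $\varphi\in\mathcal E_\chi(\Omega)$ with $\mu=(dd^c\varphi)^n$ and $z\in\overline\Omega$, choose a ball $D\ni z$ so that $\Omega\cap D$ is hyperconvex. Since the Monge--Amp\`ere operator is local, $(dd^c\varphi)^n|_{\Omega\cap D}=\mu|_{\Omega\cap D}$ and $\int_{\Omega\cap D}\chi(\varphi)(dd^c\varphi)^n\leq e_\chi(\varphi)<+\infty$, so $\varphi|_{\Omega\cap D}\in\mathcal E(\Omega\cap D)$ has finite weighted energy there; by the energy characterisation of the class this puts $\varphi|_{\Omega\cap D}\in\mathcal E_\chi(\Omega\cap D)$, and one takes $\psi=\varphi|_{\Omega\cap D}$. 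Alternatively, if one prefers not to rely on restriction, one applies Theorem \ref{the43} on $\Omega\cap D$, where the required local $\mathcal E$--domination is immediate from $\mu=(dd^c\varphi)^n$ and the energy estimate comes from the inequality proved for Theorem \ref{the42}.

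For $(ii)\Rightarrow(i)$ the decisive feature is that the hypothesis is posed on the \emph{compact} set $\overline\Omega$. I would first extract from the cover a finite subcover $D_1,\dots,D_N$ of $\overline\Omega$, with local solutions $\psi_k\in\mathcal E_\chi(\Omega\cap D_k)$ satisfying $(dd^c\psi_k)^n=\mu|_{\Omega\cap D_k}$, and then shrink to open sets $V_k$ with $\overline{V_k}\cap\overline\Omega\subset D_k$ still covering $\overline\Omega$. For each $k$ I would form the global envelope
$$\widetilde\psi_k=\Big(\sup\{w\in\text{PSH}^-(\Omega):w\leq\psi_k\text{ on }\Omega\cap V_k\}\Big)^{*},$$
exactly as in the construction of $h^{\varphi}_{D,\Omega}$ in Remark \ref{rem41} ii), aiming for $\widetilde\psi_k\in\mathcal E_\chi(\Omega)$ and $(dd^c\widetilde\psi_k)^n\geq 1_{\Omega\cap V_k}(dd^c\psi_k)^n=1_{\Omega\cap V_k}\mu$. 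Setting $\Phi=\sum_{k=1}^N\widetilde\psi_k$, the assumption $\chi\in\mathcal A_M$ makes $\mathcal E_\chi(\Omega)$ a convex cone, so $\Phi\in\mathcal E_\chi(\Omega)$, while superadditivity of the operator, $(dd^c\Phi)^n\geq\sum_{k=1}^N(dd^c\widetilde\psi_k)^n$, combined with $\sum_{k=1}^N 1_{\Omega\cap V_k}\geq 1$ on $\Omega$, yields $\mu\leq(dd^c\Phi)^n$. This is the required global subsolution, and Remark \ref{rem41} iii) then finishes the proof.

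The hard part is the envelope step near the boundary. For a $D_k$ meeting $\partial\Omega$ the set $\Omega\cap V_k$ is not relatively compact in $\Omega$, so establishing both that $\widetilde\psi_k$ genuinely lies in $\mathcal E_\chi(\Omega)$ (finite weighted energy up to $\partial\Omega$) and that its Monge--Amp\`ere mass dominates $\mu$ all the way to $\partial\Omega$ is where the analysis concentrates. This is exactly the place where the hypothesis over $\overline\Omega$, rather than merely over $\Omega$, is indispensable, and it is what blocks the weaker localisation of Example \ref{ex25}, where a measure in $\mathcal M^\omega_{\chi,loc}(\Omega)\setminus\mathcal M_{\chi,loc}(\Omega)$ fails to admit any global solution. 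I expect to control the boundary behaviour by using that each $\psi_k\in\mathcal E_\chi(\Omega\cap D_k)$ tends to $0$ along $\partial\Omega\cap D_k$ and by comparing $\widetilde\psi_k$ with a fixed negative exhaustion function of $\Omega$, so that the total Monge--Amp\`ere mass of $\widetilde\psi_k$ remains finite and its $\chi$--energy is bounded in terms of $e_\chi(\psi_k)$.
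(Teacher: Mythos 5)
Your overall strategy for $(ii)\Rightarrow(i)$ --- glue the finitely many local solutions into one global subsolution $\Phi\in\mathcal E_\chi(\Omega)$ with $\mu\leq(dd^c\Phi)^n$ and then invoke Theorem 4.14 of \cite{ACCH09} --- is a genuinely different route from the paper, which instead verifies the energy inequality of Theorem \ref{the43}. But as written your argument has two real gaps, one in each direction.

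In $(i)\Rightarrow(ii)$, the claim that $\varphi|_{\Omega\cap D}\in\mathcal E_\chi(\Omega\cap D)$ because it ``has finite weighted energy there'' is not correct. Membership in $\mathcal E_\chi(\Omega\cap D)$ requires a decreasing approximation from $\mathcal E_0(\Omega\cap D)$, i.e.\ zero boundary values on \emph{all} of $\partial(\Omega\cap D)$, including the interior piece $\Omega\cap\partial D$, where $\varphi$ is strictly negative; finiteness of $\int_{\Omega\cap D}\chi(\varphi)(dd^c\varphi)^n$ does not characterize the class. (Concretely: if $\mu|_{\Omega\cap D}=0$ the only solution in $\mathcal E_\chi(\Omega\cap D)$ is $\psi\equiv 0$, not $\varphi|_{\Omega\cap D}$.) The paper therefore builds a \emph{new} function $\psi\geq\varphi|_{\Omega\cap D}$ solving $(dd^c\psi)^n=\mu|_{\Omega\cap D}$, via the Cegrell decomposition and Ko{\l}odziej's theorem, with the energy of the approximants controlled by $e_\chi(\varphi)$; and when $\chi(-\infty)<+\infty$ it must additionally handle the part of $\mu$ carried by $\{\varphi=-\infty\}$ using Lemmas 4.7 and 4.13 of \cite{ACCH09}. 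Your proposal does not address the pluripolar part at all.

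In $(ii)\Rightarrow(i)$, the step you yourself flag as ``the hard part'' is precisely the step that is missing: for a chart $D_k$ meeting $\partial\Omega$, the set $\Omega\cap V_k$ is not relatively compact in $\Omega$ and $\psi_k$ is defined only on $\Omega\cap D_k$, so neither the balayage inequality $(dd^c\widetilde\psi_k)^n\geq 1_{\Omega\cap V_k}\mu$ nor the membership $\widetilde\psi_k\in\mathcal E_\chi(\Omega)$ with $e_\chi(\widetilde\psi_k)$ controlled by $e_\chi(\psi_k)$ follows from the standard results you cite (Remark \ref{rem41} ii) uses $K\Subset D\Subset\Omega$). Saying you ``expect to control the boundary behaviour'' is an acknowledgement of the gap, not a proof. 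The paper circumvents exactly this difficulty: it forms envelopes only inside the local hyperconvex domains $\Omega_j\cap\Omega$ over sets $D_j\cap\Omega$ with $D_j\Subset\Omega_j$ (so the sweeping is relatively compact in the \emph{local} domain), transfers a test function $u\in\mathcal E_0(\Omega)$ to $u_j\in\mathcal E_0(\Omega_j\cap\Omega)$ with $e_\chi(u_j)\leq d_n e_\chi(u)$ by the localization lemma of \cite{HHQ13}, applies the energy inequality of Theorem \ref{the43} on each piece with $A_j$ small enough that the sum has coefficient $\tfrac12<1$, and then concludes by Theorem \ref{the43}. If you want to salvage your subsolution approach, you would need to prove the boundary envelope estimates you postponed; as it stands the central assertion $\mu\leq(dd^c\Phi)^n$ with $\Phi\in\mathcal E_\chi(\Omega)$ is unsupported.
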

\begin{proof}
	$\bullet$ $i)\Rightarrow ii)$: Take $\varphi\in\mathcal E_\chi(\Omega)$ such that $\mu=(dd^c\varphi)^n$. Take $z\in\overline{\Omega}$ and a neighbourhood $D$ of $z$. We consider two cases:\\
	First, in the case of $\chi(-\infty)=+\infty$. From Remark 3.4 in \cite{HH11}, we have $\mu(E)=0$ for all Borel pluripolar set $E\subset \Omega$. From Theorem 6.3 in \cite{Ce98}, we can find $\phi\in\mathcal E_0(\Omega\cap D), f\in L^1_{loc}((dd^c\phi)^n)$ such that
	$$\mu=f(dd^c\phi)^n\quad\text{ on } \Omega\cap D.$$
	From Theorem A in \cite{Ko98}, there exists $\psi_j\in\mathcal E_0(\Omega\cap D)$ such that 
	$$(dd^c\psi_j)^n=\min(f,j)(dd^c\phi)^n.$$
	Basing on the comparison principle, we have $\psi_j\searrow\psi\geq \varphi|_{\Omega\cap D}$ as $j\to\infty$ and
	$$(dd^c\psi)^n=\lim_{j\to\infty}(dd^c\psi_j)^n=\mu|_{\Omega\cap D}.$$
	We have
	$$\sup_{j\geq 1}\int_{\Omega\cap D}\chi(\psi_j)(dd^c\psi_j)^n\leq\int_{\Omega\cap D}\chi(\varphi)(dd^c\varphi)^n<+\infty.$$
	This implies that $\psi\in\mathcal E_\chi(\Omega\cap D)$.\\
	Second, in the case of $\chi(-\infty)<+\infty$. We have
	$$
	\begin{aligned}
		\int_{\{\varphi=-\infty\}}(dd^c\varphi)^n&=\frac{1}{\chi(-\infty)}\int_{\{\varphi=-\infty\}}\chi(-\infty)(dd^c\varphi)^n\\
		&\leq \frac{1}{\chi(-\infty)}\int_\Omega \chi(\varphi)(dd^c\varphi)^n<+\infty.
	\end{aligned}
	$$
	From Lemma 4.7 and the proof of Theorem 4.8 in \cite{ACCH09}, we can find $v\in\mathcal F(\Omega\cap D)$ such that
	$$(dd^cv)^n=1_{\{\varphi=-\infty\}\cap D}(dd^c\varphi)^n.$$
	Repeating arguments in the first case, we can find $u_j\in\mathcal E_0(\Omega\cap D)$ such that $u_j\geq\varphi|_{D\cap\Omega}$ and 
	$$(dd^cu_j)^n\nearrow 1_{\{\varphi>-\infty\}\cap D}(dd^c\varphi)^n.$$
	We set	
	$$\psi_j=\sup\{u\in\mathcal E(D\cap\Omega): (dd^cu_j)^n\leq(dd^cu)^n \text{ and } u\leq v\}\geq \varphi.$$
	From Lemma 4.13 in \cite{ACCH09}, we have $u_j+v\leq\psi_j\leq v$ and
	$$(dd^c\psi_j)^n=(dd^cu_j)^n+(dd^cv)^n.$$
	Since $(dd^cu_j)^n$ is increasing sequence we have $\psi_j\searrow\psi\geq \varphi+v$. \\
	Moreover, we have
	$$(dd^c\psi_j)^n\to 1_{\{\varphi>-\infty\}\cap D}(dd^c\varphi)^n+1_{\{\varphi=-\infty\}\cap D}(dd^c\varphi)^n=(dd^c\varphi)^n.$$
	This implies that
	$$(dd^c\psi)^n=1_{\{D\cap \Omega\}}(dd^c\varphi)^n.$$
	We hereby prove that $\psi\in\mathcal E_\chi(D\cap\Omega)$. Since
	$$\sup_{j\geq 1}\int_{D\cap\Omega}\chi(u_j)(dd^cu_j)^n\leq\int_{D\cap\Omega}\chi(\varphi)1_{\varphi>-\infty}(dd^c\varphi)^n<+\infty.$$
	We get $u_j\searrow h\in\mathcal E_\chi(D\cap \Omega)$. Since $\chi(-\infty)<+\infty$ we have $\mathcal F(D\cap\Omega)\subset\mathcal E_\chi(D\cap\Omega)$. This implies that $v\in\mathcal E_\chi(D\cap\Omega)$. \\
	Moreover, we have $h+v\leq\psi\leq v$. Therefore $\psi\in\mathcal E_\chi(D\cap\Omega)$. 
	
	$\bullet$ $ii)\Rightarrow i)$: From Theorem \ref{the43}, we only need to prove that
	$$\int_\Omega\chi(u)d\mu\leq A_\mu e_\chi(u)+C_\mu, \quad \forall u\in\mathcal E_0(\Omega),$$
	for some $A_\mu\in [0,1), C_\mu>0$. \\
	Since $\overline{\Omega}$ is compact, we can choose $D_j\Subset\Omega_j$ and $\psi_j\in\mathcal E_\chi(\Omega_j\cap\Omega)$ such that
	$$\cup_{j=1}^mD_j\supset\overline{\Omega}\quad\text{and}\quad \mu|_{\Omega_j\cap\Omega}=(dd^c\psi_j)^n,\; \forall j=\overline{1,m}.$$
	Set
	$$\varphi_j=\sup\{\psi\in\text{PSH}^-(\Omega_j\cap\Omega): \psi\leq \psi_j\;\text{ on }\; D_j\cap \Omega\}.$$
	Then $\varphi_j\in\mathcal E_\chi(\Omega_j\cap\Omega)$, $\mu|_{D_j\cap\Omega}\leq (dd^c\varphi_j)^n$ and $\text{supp}(dd^c\varphi_j)^n\subset\overline{D}_j\cap\Omega$. 
	Take $u\in\mathcal E_0(\Omega)$. We set
	$$u_j=\sup\{\psi\in\text{PSH}^-(\Omega_j\cap\Omega): \psi\leq u\;\text{ on }\; \overline{D}_j\cap\Omega\}.$$
	Then $u_j\in\mathcal E_0(\Omega_j\cap\Omega)$. From Lemma 3.3 and arguments in the proof of (ii) in Theorem 3.4 in \cite{HHQ13}, we have
	$$
	\begin{aligned}
		\int_{\Omega_j\cap\Omega}\chi(u_j)(dd^cu_j)^n&\leq c_n\int_\Omega\chi(u)(dd^cu)^n+\int_K|u|\chi(u)dV_{2n}\\
		&\leq d_n\int_\Omega\chi(u)(dd^cu)^n
	\end{aligned}
	$$
	for some $c_n>0,\; K\Subset\Omega$.\\
	We have
	$$
	\begin{aligned}
		\int_\Omega\chi(u)d\mu&\leq\sum_{j=1}^m\int_{D_j\cap\Omega}\chi(u)d\mu\\
		&\leq\sum_{j=1}^m\int_{D_j\cap\Omega}\chi(u_j)(dd^c\varphi_j)^n.
	\end{aligned}
	$$
	Basing on Theorem \ref{the43}, we can find a constants
	$$A_j=\min\left(\frac{1}{2md_n}, \frac{1}{2m}\right),\; c_j>0$$
	such that
	$$\int_{\Omega_j\cap\Omega}\chi(v)(dd^c\varphi_j)^n\leq A_je_\chi(v)+c_j,\quad\forall v\in\mathcal E_0(\Omega_j\cap\Omega).$$
	So
	$$
	\begin{aligned}
		\int_\Omega\chi(u)(dd^cu)^n&\leq \sum_{j=1}^mA_je_\chi(u_j)+\sum_{j=1}^m c_j\\
		&\leq \sum_{j=1}^mA_jd_ne_\chi(u)+\sum_{j=1}^mc_j\\
		&<\frac{1}{2}e_\chi(u)+C\quad(\text{where } C=\sum_{j=1}^mc_j). 
	\end{aligned}
	$$
\end{proof}

\begin{theorem}\label{the46}
	Let $\mu$ be a non-negative Radon measure on $\Omega$. Then the following two statements are equivalent:\\
	i) There exists $\varphi\in\mathcal E_{\chi,loc}(\Omega)$ such that $\mu=(dd^c\varphi)^n$,\\
	ii) For all $z\in\overline{\Omega}$ there exists a neighbourhood $D$ of $z$ and $\psi\in\mathcal E_{\chi,loc}(D\cap\Omega)$ such that $\mu|_{D\cap\Omega}=(dd^c\psi)^n$.  
\end{theorem}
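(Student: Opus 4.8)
The plan is to imitate the proof of Theorem \ref{the44}, using it as the engine and passing to the loc-classes through an exhaustion of $\Omega$ together with a gluing argument near the boundary. For the implication $i)\Rightarrow ii)$ I would simply take $\psi=\varphi|_{D\cap\Omega}$ and check that $\varphi|_{D\cap\Omega}\in\mathcal E_{\chi,loc}(D\cap\Omega)$: given $E\Subset D\cap\Omega$, the definition of $\mathcal E_{\chi,loc}(\Omega)$ provides $\psi_E\in\mathcal E_\chi(\Omega)$ with $\varphi=\psi_E$ on a neighbourhood of $\overline E$, and replacing it by the balayage $\sup\{u\in\text{PSH}^-(D\cap\Omega):u\le\psi_E\text{ on }E\}$ produces a function of $\mathcal E_\chi(D\cap\Omega)$ still equal to $\varphi$ on $E$, exactly as in the computation of Remark \ref{rem41}. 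This direction is routine.

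For $ii)\Rightarrow i)$ I would first localize. From $\psi\in\mathcal E_{\chi,loc}(D\cap\Omega)$ the very definition of the loc-class gives, for each $E\Subset D\cap\Omega$, a function $g\in\mathcal E_\chi(D\cap\Omega)$ with $\psi=g$ on $E$, whence $\mu|_E\le(dd^cg)^n$ and $\mu|_E\in\mathcal M_\chi(D\cap\Omega)$ by the domination characterization in Remark \ref{rem41}. Consequently, for every $D'\Subset\Omega$ the compactly supported measure $\mu|_{\overline{D'}}$ verifies hypothesis $ii)$ of Theorem \ref{the44}: at points off $\overline{D'}$ (in particular at all boundary points) one uses the zero function on a neighbourhood missing $\overline{D'}$, and at the finitely many interior points covering $\overline{D'}$ one uses the local $\mathcal E_\chi$-representation just obtained. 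Theorem \ref{the44} then yields $\varphi_{D'}\in\mathcal E_\chi(\Omega)$ with $(dd^c\varphi_{D'})^n=\mu$ on $D'$.

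Next I would build the global solution. By compactness of $\overline\Omega$, choose a finite cover by neighbourhoods $V_j\Subset D_j$ with $\psi_j\in\mathcal E_{\chi,loc}(D_j\cap\Omega)$ and $(dd^c\psi_j)^n=\mu$ on $D_j\cap\Omega$, and glue their balayages into $\Omega$ to obtain $w:=\sum_j\widehat{\psi}_j\in\mathcal E_{\chi,loc}(\Omega)$ with $(dd^cw)^n\ge\mu$. This single finite sum of honest $\mathcal E$-functions is where the boundary part of the hypothesis is indispensable; it is precisely what is unavailable in Example \ref{ex25}, and it prevents the collapse $w\equiv-\infty$. Applying Theorem 4.14 of \cite{ACCH09} to the subsolution $w$ produces $\varphi\in\mathcal E(\Omega)$ with $\varphi\ge w$ and $(dd^c\varphi)^n=\mu$. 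To identify $\varphi\in\mathcal E_{\chi,loc}(\Omega)$, fix $D\Subset\Omega$; since $\varphi\ge w$ and $\chi$ is decreasing, $\int_D\chi(\varphi)\,d\mu\le\int_D\chi(w)\,d\mu\le\int_D\chi(w)(dd^cw)^n<\infty$, and the balayage $h^\varphi_{D,\Omega}=\sup\{u\in\text{PSH}^-(\Omega):u\le\varphi\text{ on }D\}$ then coincides with $\varphi$ on $D$, satisfies $h^\varphi_{D,\Omega}\ge w$, and carries Monge-Amp\`ere mass supported in $\overline D$ and equal to $\mu$ there; using $\chi(h^\varphi_{D,\Omega})\le\chi(w)$ together with this finiteness places $h^\varphi_{D,\Omega}\in\mathcal E_\chi(\Omega)$, which is exactly the defining condition for $\varphi\in\mathcal E_{\chi,loc}(\Omega)$.

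The main obstacle is this $ii)\Rightarrow i)$ globalization. The interior hypotheses alone only give $\mu\in\mathcal M^\omega_{\chi,loc}(\Omega)$, which by Example \ref{ex25} is strictly weaker than what is wanted, so the boundary neighbourhoods must be used to manufacture the global subsolution $w$ of finite local $\chi$-energy; it is the monotonicity of $\chi$ and the lower bound $\varphi\ge w$ that convert this into the local finiteness $\int_D\chi(\varphi)\,d\mu<+\infty$. The one genuinely delicate computation is controlling the extra Monge-Amp\`ere mass of $h^\varphi_{D,\Omega}$ carried on $\partial D$, which I expect to handle by taking $D$ with smooth strictly pseudoconvex boundary where $w$ stays bounded, so that $\int_{\partial D}\chi(h^\varphi_{D,\Omega})(dd^ch^\varphi_{D,\Omega})^n$ is finite.
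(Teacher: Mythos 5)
Your $i)\Rightarrow ii)$ is fine and is essentially what the paper does (restriction of $\varphi$ plus the local property of $\mathcal E_{\chi,loc}$ from \cite{HHQ13}, refined by balayage onto relatively compact subsets). The genuine gap is in $ii)\Rightarrow i)$, at the gluing step. You form $\widehat\psi_j=\sup\{u\in\text{PSH}^-(\Omega):u\le\psi_j\text{ on }V_j\cap\Omega\}$ for a function $\psi_j$ that is defined only on $D_j\cap\Omega$, and you need $(dd^c\widehat\psi_j)^n\ge\mu$ on $V_j\cap\Omega$. The usual balayage inequality $(dd^ch^{\varphi}_{D,\Omega})^n\ge 1_D(dd^c\varphi)^n$ rests on $\varphi$ itself being a member of the defining family, which forces $h^{\varphi}_{D,\Omega}=\varphi$ on the open set $D$ and lets one invoke locality of the Monge--Amp\`ere operator. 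Here $\psi_j$ is not a competitor (it does not live on $\Omega$), and $V_j\cap\Omega$ is not relatively compact in $D_j\cap\Omega$ since it touches $\partial\Omega$; so nothing guarantees $\widehat\psi_j=\psi_j$ on $V_j\cap\Omega$, nor even that the defining family is nonempty. What you actually need is a subextension of $\psi_j$ across $\partial D_j\cap\Omega$ whose Monge--Amp\`ere measure still dominates $\mu|_{V_j\cap\Omega}$ as a measure, and the available subextension theorems give only total-mass inequalities. Since the existence of the global subsolution $w$ is exactly the crux (as you yourself observe via Example \ref{ex25}), asserting it is not enough; the closing claim that $h^{\varphi}_{D,\Omega}\in\mathcal E_\chi(\Omega)$ by controlling the mass on $\partial D$ is likewise left as a hope.

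The paper takes a route that avoids this boundary subextension entirely, and you already hold its ingredients. Your first localization gives $\mu|_E\in\mathcal M_\chi(D\cap\Omega)$ for every $E\Subset D\cap\Omega$; covering a slightly shrunk $D\cap\Omega$ by finitely many such sets, summing the corresponding potentials $\psi_{E_k}\in\mathcal E_\chi(D\cap\Omega)$ (a finite sum stays in $\mathcal E_\chi$ thanks to $\chi\in\mathcal A_M$) and applying Theorem 4.14 of \cite{ACCH09} upgrades the hypothesis to: for every $z\in\overline\Omega$ there is $\varphi_z\in\mathcal E_\chi(D\cap\Omega)$ with $\mu|_{D\cap\Omega}=(dd^c\varphi_z)^n$. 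That is verbatim hypothesis $ii)$ of Theorem \ref{the44}, which then produces a global solution in $\mathcal E_\chi(\Omega)\subset\mathcal E_{\chi,loc}(\Omega)$. The globalization inside Theorem \ref{the44} is carried out through the energy inequality of Theorem \ref{the43} --- summing the local estimates $\int\chi(u)(dd^c\varphi_j)^n\le A_je_\chi(u_j)+c_j$ over a finite cover --- rather than through a global subsolution, which is precisely why the difficulty you run into never arises there. Replace the gluing construction by this reduction and the proof closes.
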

\begin{proof}
	$\bullet$ $i)\Rightarrow ii)$: Take $\varphi\in\mathcal E_{\chi,loc}(\Omega)$ such that $\mu=(dd^c\varphi)^n$. Take $z\in\overline{\Omega}$ and a neighbourhood $D$ of $z$. From Theorem 3.4 in \cite{HHQ13}, we have $\varphi|_{D\cap\Omega}\in\mathcal E_{\chi,loc}(D\cap\Omega)$. For all $D'\Subset D\cap\Omega$, we set
	$$h^\varphi_{D',D\cap\Omega}=\sup\{u\in\text{PSH}^-(D\cap\Omega): u\leq\varphi \text{ on } D'\}.$$
	Then $h^\varphi_{D',D\cap\Omega}\in\mathcal E_\chi(D\cap\Omega)$ and
	$$(dd^ch^\varphi_{D',D\cap\Omega})^n\geq1_{D'}(dd^c\varphi)^n=\mu|_{D'}.$$
	This infer that $\mu|_{D'}\in\mathcal M_\chi(D\cap\Omega)$. So there exists $\psi_{D'}\in\mathcal E_\chi(D\cap\Omega)$ such that $\mu|_{D'}=(dd^c\psi_{D'})^n$. \\
	Take $D'_j\Subset D\cap\Omega$ such that $\cup_{j=1}^kD'_j=D\cap\Omega$. Then we have
	$$
	\begin{aligned}
		\mu|_{D\cap\Omega}\leq\sum_{j=1}^k\mu|_{D'_j}=\sum_{j=1}^k(dd^c\psi_{D'_j})^n\leq\left(dd^c\Big(\sum_{j=1}^k\psi_{D'_j}\Big)\right)^n.
	\end{aligned}
	$$
	From Theorem 4.14 in \cite{ACCH09}, we can find a function $\psi\in\mathcal E(D\cap\Omega)$ such that $\psi\geq \sum_{j=1}^k\psi_{D'_j}$ and $\mu|_{D\cap\Omega}=(dd^c\psi)^n$.

	$\bullet$ $ii)\Rightarrow i)$: Take $z\in\overline{\Omega}$, a neighbourhood $D$ of $z$ and $\psi\in\mathcal E_{\chi,loc}(D\cap\Omega)$ such that $\mu|_{D\cap\Omega}=(dd^c\psi)^n$. By repeating arguments similar to the above proof, we can find $\varphi\in\mathcal E_\chi(D\cap\Omega)$ such that $\mu|_{D\cap\Omega}=(dd^c\varphi)^n$. Now, from Theorem \ref{the44} we have i).	
\end{proof}


\begin{thebibliography}{00}
	
	
	\bibitem{Ah07} P. {\AA}hag, A Dirichlet problem for the complex Monge-Amp\`ere operator in $\mathcal{F}(f)$, {\it Michigan Math. J}, {\bf 55} (2007), 123--138.
	
	\bibitem{ACCH09} P. {\AA}hag, U. Cegrell , R. Czyz and Pham Hoang Hiep, Monge-Ampère measures on pluripolar sets, {\it J. Math. Pures Appl.}, 92 (2009), 613--627.
	
	\bibitem{BT82} E. Bedford and B. A. Taylor, A new capacity for plurisubharmonic functions, {\it Acta Math}, {\bf 149} (1982), no. 1-2, 1--40.
	
	\bibitem{BT1} E.Bedford and B.A.Taylor, The Dirichlet problem for a complex Monge-Amp\`ere equation, {\it Invent. Math}, {\bf 37} (1976), 1--44.
	
	\bibitem{Be09} S. Benelkourchi, Weighted Pluricomplex Energy, {\it Potential Analysis}, {\bf 31} (2009), 1--20.
	
	\bibitem{BGZ09} S. Benelkourchi, V.Guedj and A.Zeriahi, Plurisubharmonic functions with weak singularities, In: Proceedings from the Kiselmanfest, Uppsala University, V\"astra Aros (2009), 57--74.

	\bibitem{Be15} S. Benelkourchi, Weighted Pluricomplex Energy II, {\it International Journal of Partial Differential Equations}, Volume 2015, Article ID 947819, 8 pages.
	
	\bibitem{Blo04} Z. Blocki, On the definition of the Monge-Amp\`ere operator in $\mathbb C^2$, {\it Math. Ann.}, {\bf 328} (2004), 415--423.
	
	\bibitem{Blo06} Z. Blocki, The domain of definition of the complex Monge-Amp\`ere operator, {\it Amer. J. Math.}, {\bf 128} (2006), 519--530.
	
	\bibitem{Ce98} U. Cegrell, Pluricomplex energy, {\it  Acta Math}, {\bf 180} (1998), 187--217.
	
	\bibitem{Ce04} U. Cegrell, The general definition of the complex Monge-Amp\`ere operator, {\it Ann. Inst. Fourier (Grenoble)}, {\bf 54} (2004), 159--179.
	
	\bibitem{Ce08} U. Cegrell, A general Dirichlet problem for the complex Monge-Amp\`ere operator, {\it Ann. Polon. Math.}, {\bf 94} (2008), 131--147.
	
	\bibitem{CePo97} H. I. Celik, S. Poletsky, {\it Fundamental solutions of the complex Monge - Amp\`ere equation}, Ann. Polon. Math. {\bf 67} (1997), 103 - 110.
	
	\bibitem{De97} J-P. Demailly, {\sl Complex analytic and differential geometry}, Self published e-book, 1997.
	
	\bibitem{De93} J-P. Demailly, {\sl Monge-Amp\`ere operators, Lelong numbers and intersection theory}, Complex analysis and geometry, Univ. Ser. Math, Plenium, New York (1993), 115--193.
	
	\bibitem{De87} J-P. Demailly, {\sl Mesures de Monge - Amp\`ere et mesures pluriharmonicques}, Math. Z. {\bf 194} (1987), 519 - 564.
	
	\bibitem{GZ07} V. Guedj and A. Zeriahi, The weighted Monge-Amp\`ere energy of quasiplurisubharmonic functions, {\it J. Funct. Anal.}, {\bf 250} (2007), 442--482.
	
	\bibitem{HH11} L. M. Hai and P. H. Hiep, Some Weighted Energy Classes of Plurisubharmonic Functions, {\it Potential Analysis}, {\bf 34} (2011), 43--56.
	
	\bibitem{HHQ13} L. M.  Hai, P. H. Hiep, H. N. Quy, Local property of the class $\mathcal E_{\chi,loc}$, {\it J. Math. Anal. Appli.}, {\bf 402} (2013), 440--445.
	
	\bibitem{Hi08} P. H. Hiep, Pluripolar sets and the subextension in Cegrell's classes, {\it Complex Var. and Elliptic Equations}, {\bf 53} (2008), 675--684.
	
	\bibitem{Kli91} M. Klimek, {\sl Pluripotential Theory}, The Clarendon Press Oxford University Press, New York, Oxford Science Publications, 1991.
	
	\bibitem{Ko98} S. Ko{\l}odziej, The Monge-Amp\`{e}re equation, {\it Acta Math}, {\bf 180} (1998), 69--117.
	
	\bibitem{Ko95} S. Ko{\l}odziej, The range of the complex Monge-Amp\`ere operator, {\it  Indiana Univ. Math. J.}, {\bf 44} (1995), 765--782.
	
	\bibitem{Lem81} L. Lempert, La m\'etrique de Kobayashi et la repr\'esentation des domaines sur la boule, {\it Bull. Soc. Math. France}, {\bf 109} (1981), 427--472.
	
	\bibitem{Lem83} L. Lempert, Solving the degenerate complex Monge - Amp\`ere equation with one concentrated singularity, {\it Math. Ann.}, {\bf 263} (1983), 515--532.
	
	\bibitem{Le89} P. Lelong, {\it Fonction de Green pluricomplexe et lemmes de Schwarz dans les espaces de Banach}, J. Math. Pures Appl., {\bf 68} (1989), 319 - 347.
	
	\bibitem{Qu23} H. N. Quy, The local properties of some subclasses of plurisubharmonic, {\it to appear Indian Journal of Pure and Applied Mathematics}. Doi: https://doi.org/10.1007/s13226-023-00402-5
	
	\bibitem{Ze97} A. Zeriahi, Pluricomplex Green functions and the Dirichlet problem for the complex Monge - Amp\`ere operator, {\it Michigan Math. J.}, {\bf 44} (1997), 579--596.
	
	\bibitem{Xi99} Y. Xing, {\it Complex Monge - Amp\`ere equations with a countable number of singular points}, Indiana Univ. Math. J., {\bf 48} (1999), 749 - 765.
	
	\bibitem{wa68} J. B. Walsh, Continuity of envelopes of plurisubharmonic functions, {\it J. Math. Mech.} {\bf 18} (1968), 143--148.
	
\end{thebibliography}
\end{document}